\definecolor{webgreen}{rgb}{0,.5,0}
\definecolor{webbrown}{rgb}{.6,0,0}
\begin{document}


\theoremstyle{plain}
\newtheorem{theorem}{Theorem}
\newtheorem{corollary}[theorem]{Corollary}
\newtheorem{lemma}[theorem]{Lemma}
\newtheorem{proposition}[theorem]{Proposition}

\theoremstyle{definition}
\newtheorem{definition}[theorem]{Definition}
\newtheorem{example}[theorem]{Example}
\newtheorem{conjecture}[theorem]{Conjecture}

\theoremstyle{remark}
\newtheorem{remark}[theorem]{Remark}

\begin{center}
\vskip 1cm{\Large\bf
Sums of Powers of Integers and the Sequence \\
\vskip .1in
A304330}
\vskip 1cm
\large
Jos\'{e} L. Cereceda\\
Collado Villalba, 28400 (Madrid)\\
Spain\\
\href{mailto:jl.cereceda@movistar.es}{\tt jl.cereceda@movistar.es} \\
\end{center}

\vskip .2in

\begin{abstract}
For integer $k \geq 1$, let $S_k(n)$ denote the sum of the $k$th powers of the first $n$ positive integers. In this paper, we derive a new formula expressing $2^{2k}$ times $S_{2k}(n)$ as a sum of $k$ terms involving the numbers in the $k$th row of the integer sequence A304330, which is closely related to the central factorial numbers with even indices of the second kind. Furthermore, we provide an alternative proof of Knuth's formula for $S_{2k}(n)$ and show that it can equally be expressed in terms of A304330. Moreover, we obtain corresponding formulas for $2^{2k-1}S_{2k-1}(n)$ and determine the Faulhaber form of both $S_{2k}(n)$ and $S_{2k+1}(n)$ in terms of A304330 and the Legendre-Stirling numbers of the first kind.
\end{abstract}

\section{Introduction and main results}

For integer $k \geq 1$, consider the sum of the $k$th powers of the first $n$ positive integers $S_k(n) = 1^k + 2^k + \cdots + n^k$. Recently, the author \cite[Theorem 2]{cere} has shown that $S_{2k}(n)$ can be expressed in the form
\begin{equation}\label{int1}
S_{2k}(n) = \frac{1}{2} \sum \frac{(2k)!}{b_1! b_2! \cdots b_k!} \prod_{r=1}^{k} \biggl( \frac{1}{4^r (2r)!}
\biggl )^{b_r} 2^m m! \binom{2n+m+1}{2m+1},
\end{equation}
where the summation takes place over all $k$-tuples of nonnegative integers $(b_1,b_2,\ldots,b_k)$ satisfying the constraint $b_1 + 2b_2 + \cdots + k b_k = k$, and where $m = b_1 + b_2 +\cdots +b_k$. Since $m$ runs over all the integers in the set $\{1,2,\ldots,k \}$, the above formula implies that $S_{2k}(n)$ can be expressed in the polynomial form
\begin{equation*}
S_{2k}(n) = \sum_{m=1}^k p_{k,m} \binom{2n+m+1}{2m+1},
\end{equation*}
for certain (nonzero) rational coefficients $p_{k,1}, p_{k,2}, \ldots, p_{k,k}$ which are independent of $n$. Indeed, as can be checked for $k =1,2,3,4$, formula \eqref{int1} yields
\begin{align*}
S_2(n) & = \frac{1}{4} \binom{2n+2}{3}, \\
S_4(n) & = \frac{1}{16} \binom{2n+2}{3} + \frac{3}{4} \binom{2n+3}{5}, \\
S_6(n) & = \frac{1}{64} \binom{2n+2}{3} + \frac{15}{16} \binom{2n+3}{5} + \frac{45}{8} \binom{2n+4}{7}, \\
S_8(n) & = \frac{1}{256} \binom{2n+2}{3} + \frac{63}{64} \binom{2n+3}{5} + \frac{315}{16} \binom{2n+4}{7}
+ \frac{315}{4} \binom{2n+5}{9}.
\end{align*}
If we now multiply these equations by the corresponding factor $2^{2k}$, $k =1,2,3,4$, we are left with the following all-integer formulas:
\begin{align*}
2^2 S_2(n) & = \binom{2n+2}{3}, \\
2^4 S_4(n) & = \binom{2n+2}{3} + 12 \binom{2n+3}{5}, \\
2^6 S_6(n) & = \binom{2n+2}{3} + 60 \binom{2n+3}{5} + 360 \binom{2n+4}{7}, \\
2^8 S_8(n) & = \binom{2n+2}{3} + 252 \binom{2n+3}{5} + 5040 \binom{2n+4}{7} + 20160 \binom{2n+5}{9}.
\end{align*}

At this point, it is pertinent to bring up the sequence A304330 in the On-Line Encyclopedia of Integer Sequences (OEIS) \cite{OEIS}, whose general term is given by
\begin{equation}\label{exprkm}
R(k,m) = \sum_{j=0}^m (-1)^j \binom{2m}{j} (m-j)^{2k}, \quad 0 \leq m \leq k.
\end{equation}
Table \ref{tab:1} displays the first few rows of the numerical triangle for the sequence A304330. As we will see now, there is an intimate connection between this sequence and the sequence of central factorial numbers with even indices of the second kind $T(2k,2m)$, listed as A036969 in the OEIS. Following the notation introduced by Gelineau and Zeng \cite{gelineau}, in this paper we will write $U(k,m)$ to refer to $T(2k,2m)$. As shown by Butzer et al.\ \cite[Proposition 2.4 (xiii)]{butzer}, the numbers $U(k,m)$ admit the explicit formula
\begin{equation*}
U(k,m) = \frac{2}{(2m)!} \sum_{j=0}^m (-1)^{m+j} \binom{2m}{m-j} j^{2k},
\end{equation*}
or, equivalently,
\begin{equation*}
U(k,m) = \frac{2}{(2m)!} \sum_{j=0}^m (-1)^j \binom{2m}{j} (m-j)^{2k},
\end{equation*}
where it is assumed that $0 \leq m \leq k$. Thus, $R(k,m)$ and $U(k,m)$ are related by
\begin{equation}\label{int2}
R(k,m) = \frac{(2m)!}{2} U(k,m).
\end{equation}

\begin{table}[ttt]
\centering
\begin{tabular}{|c||c|c|c|c|c|c|c|}
\hline
$k \backslash m$ & $m=0$ & $m=1$ & $m=2$ & $m=3$ & $m=4$ & $m=5$ & $m=6$ \\
\hline\hline
$k=0$ & 1 & 0 & 0 & 0 & 0 & 0 & 0  \\ \hline
$k=1$ & 0 & 1 & 0 & 0 & 0 & 0 & 0  \\ \hline
$k=2$ & 0 & 1 & 12 & 0 & 0 & 0 & 0  \\ \hline
$k=3$ & 0 & 1 & 60 & 360 & 0 & 0 & 0  \\ \hline
$k=4$ & 0 & 1 & 252 & 5040 & 20160 & 0 & 0  \\ \hline
$k=5$ & 0 & 1 & 1020 & 52920 & 604800 & 1814400 & 0  \\ \hline
$k=6$ & 0 & 1 & 4092 & 506880 & 12640320 & 99792000 & 239500800  \\ \hline
\end{tabular}
\caption{Triangular array of the numbers $R(k,m)$ up to $k=6$.}\label{tab:1}
\end{table}

We can see that the integer coefficients that appear in the previous expressions of $2^{2k} S_{2k}(n)$, $k =1,2,3,4$, are precisely the entries in the $k$th row of the triangle in Table~\ref{tab:1}. We could then conjecture that this rule effectively applies for successive values of $k$ and, eventually, for every $k$. The following theorem (which is proven in Section \ref{sec:2}) confirms that this is indeed the case.
\begin{theorem}\label{th:1}
For integer $k \geq 1$, we have
\begin{equation}\label{th1}
2^{2k} S_{2k}(n) = \sum_{m=1}^k R(k,m) \binom{2n+m+1}{2m+1}.
\end{equation}
\end{theorem}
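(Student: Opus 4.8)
The plan is to prove \eqref{th1} by induction on $n$, exploiting the fact that both sides satisfy the same first-order recurrence. Write $F(n)$ for the right-hand side of \eqref{th1}. Since $2^{2k}S_{2k}(n) - 2^{2k}S_{2k}(n-1) = (2n)^{2k}$ and $2^{2k}S_{2k}(0) = 0$, it suffices to show that $F(0) = 0$ and that $F(n) - F(n-1) = (2n)^{2k}$ for every $n \geq 1$. The base case is immediate: replacing $n$ by $0$ kills every term of $F(0)$ because $\binom{m+1}{2m+1} = 0$ for all $m \geq 1$. Everything therefore reduces to evaluating the telescoped difference term by term.

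The key computational step is the identity
\[
\binom{2n+m+1}{2m+1} - \binom{2n+m-1}{2m+1} = \frac{2}{(2m)!}\prod_{i=0}^{m-1}\bigl((2n)^2 - i^2\bigr).
\]
To prove it I would write each binomial coefficient as a ratio of factorials, namely $\binom{2n+m+1}{2m+1} = \frac{1}{(2m+1)!}\frac{(2n+m+1)!}{(2n-m)!}$ and $\binom{2n+m-1}{2m+1} = \frac{1}{(2m+1)!}\frac{(2n+m-1)!}{(2n-m-2)!}$, factor out the common block $\frac{(2n+m-1)!}{(2n-m)!}$, and simplify the remaining bracket $(2n+m+1)(2n+m) - (2n-m)(2n-m-1) = 4n(2m+1)$. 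The surviving factor $\frac{(2n+m-1)!}{(2n-m)!}$ is the product of the $2m-1$ consecutive integers centered at $2n$, that is $2n\prod_{i=1}^{m-1}\bigl((2n)^2-i^2\bigr)$, which absorbs the extra $2n$ to yield exactly $\prod_{i=0}^{m-1}\bigl((2n)^2-i^2\bigr)$.

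Next I would invoke the relation \eqref{int2}, $R(k,m) = \tfrac{(2m)!}{2}U(k,m)$, so that multiplying the displayed identity by $R(k,m)$ cancels the factor $\frac{2}{(2m)!}$ and leaves precisely $U(k,m)\prod_{i=0}^{m-1}\bigl((2n)^2-i^2\bigr)$. Summing over $m$ from $1$ to $k$ gives
\[
F(n) - F(n-1) = \sum_{m=0}^{k} U(k,m)\prod_{i=0}^{m-1}\bigl((2n)^2 - i^2\bigr),
\]
where the $m=0$ term may be adjoined freely since $U(k,0) = 0$ for $k \geq 1$. The right-hand side is nothing but the defining expansion of the even power $t^{2k}$ in the even central factorial powers $\prod_{i=0}^{m-1}(t^2 - i^2)$, evaluated at $t = 2n$; since $U(k,m) = T(2k,2m)$ are the central factorial numbers of the second kind \cite{butzer}, this sum equals $(2n)^{2k}$. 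This establishes the recurrence for $F$ and closes the induction.

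The main obstacle is less the algebra than spotting the right formulation: one must recognize that shifting $n$ by one step shifts the argument $2n+1$ of the underlying odd central factorial by $2$, and that the resulting central difference collapses to the even central factorial power, which is exactly what makes the coefficients $U(k,m)$ (and hence $R(k,m)$) reappear. An alternative route would start from the explicit formula \eqref{int1}, read off the coefficient $p_{k,m}$ of $\binom{2n+m+1}{2m+1}$ as a constrained multinomial sum, and verify directly that $2^{2k}p_{k,m} = R(k,m)$; but matching that multinomial sum to $U(k,m)$ would require a generating-function argument through $\frac{1}{(2m)!}\bigl(2\sinh(t/2)\bigr)^{2m}$ and seems appreciably more laborious than the telescoping argument above.
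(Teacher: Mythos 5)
Your proof is correct, but it follows a genuinely different route from the paper's. The paper argues constructively: setting $x=(2i)^2$ in the first-kind generating function \eqref{th11}, summing over $i=1,\ldots,n$, and evaluating the sum in closed form via Lemma \ref{lm:3} (proved by induction in the Appendix) gives $\sum_{m=1}^k u(k,m)\,2^{2m+1}S_{2m}(n)=(2k)!\binom{2n+k+1}{2k+1}$; the central factorial inversion of Lemma \ref{lm:4}, applied to the sequences indexed by $k$, then solves for $2^{2k}S_{2k}(n)$. You instead fix $k$ and induct on $n$: your telescoping identity $\binom{2n+m+1}{2m+1}-\binom{2n+m-1}{2m+1}=\frac{2}{(2m)!}\prod_{i=0}^{m-1}\bigl((2n)^2-i^2\bigr)$ (which checks out) plays the role that Lemma \ref{lm:3} plays in the paper, and the expansion $(2n)^{2k}=\sum_{m=1}^k U(k,m)\prod_{i=0}^{m-1}\bigl((2n)^2-i^2\bigr)$ replaces the sequence-level inversion. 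The two proofs in fact consume the same central-factorial inputs, used dually: your expansion of the monomial in even central factorial powers is precisely what Lemma \ref{lm:4} yields when applied to \eqref{th11} with $x$ held fixed (take $a_k=\prod_{i=0}^{k-1}(x-i^2)$ and $b_m=x^m$), so you can derive it entirely inside the paper's framework rather than citing it as an external fact from Butzer et al.\ \cite{butzer}. As for what each approach buys: the paper's summation over $i$ needs the heavier binomial identity of Lemma \ref{lm:3} but produces the coefficients $R(k,m)$ without knowing them in advance, while your finite-difference argument is computationally lighter but is purely a verification and so requires the stated formula as its starting point. One point to tighten: your factorial manipulation of $\binom{2n+m-1}{2m+1}$ tacitly assumes $2n\ge m+2$; this is harmless because both sides of the telescoping identity are polynomials in $n$, so validity for all sufficiently large $n$ forces the identity for every $n\ge 1$, but that step deserves a sentence in the writeup.
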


On the other hand, in his excellent paper on Johann Faulhaber and power sums, Knuth \cite[p.\ 285]{knuth} derived the following alternative, explicit formulas for $S_{2k}(n)$, $k=1,\ldots,6$:
\begin{align*}
S_2(n) & = T_1(n),  \\
S_4(n) & = T_1(n) + 12 T_2(n), \\
S_6(n) & = T_1(n) + 60 T_2(n) + 360 T_3(n), \\
S_8(n) & = T_1(n) + 252 T_2(n) + 5040 T_3(n) + 20160 T_4(n), \\
S_{10}(n) & =  T_1(n) + 1020 T_2(n) + 52920 T_3(n) + 604800 T_4(n) + 1814400 T_5(n), \\
S_{12}(n) & =  T_1(n) + 4092 T_2(n) + 506880 T_3(n) + 12640320 T_4(n) + 99792000 T_5(n) \\
& \qquad \qquad \qquad \qquad \quad \,\, + 239500800 T_6(n),
\end{align*}
where
\begin{equation*}
T_m(n) = \frac{2n+1}{2m+1} \binom{n+m}{2m}.
\end{equation*}

Knuth also found \cite[pp.\ 285--286]{knuth} the following relationship between the power sums $S_{2k}(n)$ and $S_{2k-1}(n)$.
\begin{proposition}[Knuth, 1993]\label{pr:1}
The formula
\begin{equation*}
\frac{S_{2k}(n)}{2n+1} = a_1 \binom{n+1}{2} + a_2 \binom{n+2}{4} + \cdots + a_k \binom{n+k}{2k},
\end{equation*}
holds if and only if
\begin{equation*}
S_{2k-1}(n) = \frac{3}{1}a_1 \binom{n+1}{2} + \frac{5}{2}a_2 \binom{n+2}{4} + \cdots +
\frac{2k+1}{k} a_k \binom{n+k}{2k}.
\end{equation*}
\end{proposition}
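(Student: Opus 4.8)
The plan is to reduce the biconditional to the \emph{unique} coordinates of two explicit polynomials in the basis $\bigl\{\binom{n+m}{2m}\bigr\}_{m=1}^{k}$, and then to read those coordinates off — one set from Knuth's even-power formula, the other from the central factorial expansion of $x^{2k-1}$. Since $\binom{n+m}{2m}$ has degree $2m$ in $n$, the polynomials $\binom{n+1}{2},\binom{n+2}{4},\ldots,\binom{n+k}{2k}$ are linearly independent, so any polynomial in their span has unique coordinates. I would first show that $\frac{S_{2k}(n)}{2n+1}$ and $S_{2k-1}(n)$ both lie in this span, with coordinates $\frac{R(k,m)}{2m+1}$ and $\frac{R(k,m)}{m}$ respectively. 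Granting this, the first displayed formula forces $a_m=\frac{R(k,m)}{2m+1}$ for every $m$, while the second forces $\frac{2m+1}{m}a_m=\frac{R(k,m)}{m}$, i.e.\ again $a_m=\frac{R(k,m)}{2m+1}$; since the two conditions on $(a_1,\ldots,a_k)$ coincide, the ``if and only if'' is immediate. Everything thus reduces to computing the two coordinate vectors.

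The even coordinates are handled by Knuth's formula $S_{2k}(n)=\sum_{m=1}^{k}R(k,m)T_m(n)$: dividing by $2n+1$ and using $T_m(n)=\frac{2n+1}{2m+1}\binom{n+m}{2m}$ gives $\frac{S_{2k}(n)}{2n+1}=\sum_{m=1}^{k}\frac{R(k,m)}{2m+1}\binom{n+m}{2m}$ at once, so that $a_m=\frac{R(k,m)}{2m+1}$. The odd coordinates carry the real content. I would start from the defining expansion of the central factorial numbers of the second kind (Butzer et al.\ \cite{butzer}), namely $x^{2k}=\sum_{m=0}^{k}U(k,m)\,x^{(2m)}$ with $x^{(2m)}=\prod_{i=0}^{m-1}(x^2-i^2)$. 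Two elementary observations unlock it: first, $x^{(2m)}=x\,(2m-1)!\binom{x+m-1}{2m-1}$, and second, $U(k,0)=0$ for $k\ge1$. Hence every surviving term is divisible by $x$; dividing through by $x$ and inserting \eqref{int2} yields the polynomial identity $x^{2k-1}=\sum_{m=1}^{k}\frac{R(k,m)}{m}\binom{x+m-1}{2m-1}$. Summing over $x=1,\ldots,n$ and applying the hockey-stick identity $\sum_{x=1}^{n}\binom{x+m-1}{2m-1}=\binom{n+m}{2m}$ then produces $S_{2k-1}(n)=\sum_{m=1}^{k}\frac{R(k,m)}{m}\binom{n+m}{2m}$, giving the coordinates $\frac{R(k,m)}{m}=\frac{2m+1}{m}a_m$ and closing the reduction.

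The one genuinely load-bearing step is the passage from the central factorial expansion to $x^{2k-1}=\sum_{m}\frac{R(k,m)}{m}\binom{x+m-1}{2m-1}$; everything hinges on recognizing that dividing the even expansion by $x$ lands \emph{exactly} in the odd ``central'' binomial basis $\binom{x+m-1}{2m-1}$, which is legitimate precisely because $U(k,0)=0$. I expect this identification of $x^{(2m)}/x$ with $(2m-1)!\binom{x+m-1}{2m-1}$, together with the hockey-stick summation, to be the crux, while the remaining steps are bookkeeping.

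As a sanity check I would note the differential identity $S_{2k}'(n)=2k\,S_{2k-1}(n)+B_{2k}$ (provable from the Bernoulli expansion of $S_{p}$): comparing top-degree coefficients reproduces the ratio $\frac{2k+1}{k}$, i.e.\ the $m=k$ case. However, the induced operator is only triangular — not diagonal — in the basis $\bigl\{\binom{n+m}{2m}\bigr\}$, so it cannot by itself deliver the clean term-by-term proportionality $a_m\mapsto\frac{2m+1}{m}a_m$; this is exactly why routing through the central factorial expansion, rather than through the derivative relation, is the efficient path.
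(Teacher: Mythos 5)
Your reduction of the biconditional to uniqueness of coordinates in the linearly independent family $\bigl\{\binom{n+m}{2m}\bigr\}_{m=1}^{k}$ is sound, and your derivation of the odd-power coordinates is correct: the second-kind expansion $x^{2k}=\sum_{m=0}^{k}U(k,m)\prod_{i=0}^{m-1}(x^{2}-i^{2})$, the factorization $\prod_{i=0}^{m-1}(x^{2}-i^{2})=x\,(2m-1)!\binom{x+m-1}{2m-1}$, the vanishing of $U(k,0)$, relation \eqref{int2}, and the hockey-stick summation together yield exactly \eqref{pr22}. This is in fact the paper's own proof of \eqref{pr22} in dual form: the paper sets $x=i^{2}$ in the first-kind generating function \eqref{th11}, divides by $i$, sums, and then inverts via Lemma \ref{lm:4}, whereas you use the second-kind expansion directly and never need the inversion step. (Note also that the paper does not prove Proposition \ref{pr:1} at all; it quotes it from Knuth, so the relevant comparison is with the proofs of \eqref{pr21} and \eqref{pr22} in Section \ref{sec:2}.)

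The genuine gap is in the even half. You treat $S_{2k}(n)=\sum_{m}R(k,m)T_{m}(n)$, i.e.\ \eqref{pr21}, as a citable black box (``Knuth's formula''), but in Knuth's development that formula is \emph{deduced from} the proposition you are proving: Knuth determines the even-power coefficients by feeding the known odd-power expansion into this very if-and-only-if, as the paper states explicitly just before Proposition \ref{pr:2}. As written, your argument is therefore circular. The repair is short and stays inside your own machinery: multiply your established identity $x^{2k-1}=\sum_{m=1}^{k}\frac{R(k,m)}{m}\binom{x+m-1}{2m-1}$ by $x$, use
\begin{equation*}
x\binom{x+m-1}{2m-1}=m\biggl[\binom{x+m}{2m}+\binom{x+m-1}{2m}\biggr],
\end{equation*}
sum over $x=1,\ldots,n$ (the two resulting hockey-stick sums are $\binom{n+m+1}{2m+1}$ and $\binom{n+m}{2m+1}$), and finish with
\begin{equation*}
\binom{n+m+1}{2m+1}+\binom{n+m}{2m+1}=\frac{2n+1}{2m+1}\binom{n+m}{2m}
\end{equation*}
to obtain \eqref{pr21} with no appeal to Proposition \ref{pr:1}. (Alternatively, \eqref{pr21} follows from \eqref{pr22} via the recursion $S_{2k}(n)=(n+1)S_{2k-1}(n)-\sum_{i=1}^{n}S_{2k-1}(i)$ as in the paper's remark, or from Lemma \ref{lm:6} together with Lemma \ref{lm:4} as in Section \ref{sec:2}.) With any such independent derivation of \eqref{pr21} in place, your linear-independence argument closes the equivalence correctly.
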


Since the coefficients $a_1, a_2, \ldots, a_k$ for $S_{2k-1}(n)$ are known (see the formula for $S_{2k-1}(n)$ at the bottom of p.\ 284 of Knuth's paper \cite{knuth}), Proposition \ref{pr:1} enables one to fully determine $S_{2k}(n)$ for arbitrary $k$. The explicit expressions of the above formulas for $S_{2k}(n)$ and $S_{2k-1}(n)$ are then given as follows.
\begin{proposition}[Knuth, 1993]\label{pr:2}
For integer $k\geq 1$, we have
\begin{align}
S_{2k}(n) & = \sum_{m=1}^k R(k,m) \, \frac{2n+1}{2m+1} \binom{n+m}{2m}, \label{pr21} \\[-2mm]
\intertext{and}
S_{2k-1}(n) & = \sum_{m=1}^k  R(k,m) \, \frac{1}{m} \binom{n+m}{2m}. \label{pr22}
\end{align}
\end{proposition}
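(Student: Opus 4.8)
The plan is to establish the even-index identity \eqref{pr21} directly and then to read off the odd-index identity \eqref{pr22} from Knuth's Proposition \ref{pr:1} at no extra cost. Observe that \eqref{pr21} is equivalent to $\frac{S_{2k}(n)}{2n+1}=\sum_{m=1}^{k}\frac{R(k,m)}{2m+1}\binom{n+m}{2m}$, which is exactly the left-hand condition of Proposition \ref{pr:1} with $a_m=R(k,m)/(2m+1)$. The right-hand condition then gives $S_{2k-1}(n)=\sum_{m=1}^{k}\frac{2m+1}{m}\cdot\frac{R(k,m)}{2m+1}\binom{n+m}{2m}=\sum_{m=1}^{k}\frac{R(k,m)}{m}\binom{n+m}{2m}$, which is \eqref{pr22}. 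Thus the entire proposition reduces to proving \eqref{pr21}.

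To prove \eqref{pr21}, the starting point is the defining expansion of the central factorial numbers of the second kind with even indices,
\begin{equation*}
x^{2k}=\sum_{m=0}^{k}U(k,m)\prod_{i=0}^{m-1}(x^{2}-i^{2}),
\end{equation*}
with the convention that the empty product ($m=0$) equals $1$. Summing over $x=1,2,\ldots,n$ and interchanging the order of summation yields
\begin{equation*}
S_{2k}(n)=\sum_{m=1}^{k}U(k,m)\sum_{x=1}^{n}\prod_{i=0}^{m-1}(x^{2}-i^{2}),
\end{equation*}
the $m=0$ contribution vanishing because $U(k,0)=0$ for $k\geq1$. Everything now rests on a closed form for the inner sum.

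The crucial ingredient is the summation formula
\begin{equation*}
\sum_{x=1}^{n}\prod_{i=0}^{m-1}(x^{2}-i^{2})=\frac{(2m)!}{2}\cdot\frac{2n+1}{2m+1}\binom{n+m}{2m},
\end{equation*}
whose right-hand side is $\tfrac{(2m)!}{2}T_m(n)$. I would prove this by induction on $n$, since it is really a telescoping statement: the base case $n=0$ holds because both sides vanish, and the inductive step reduces to the single-step difference
\begin{equation*}
\frac{2n+1}{2m+1}\binom{n+m}{2m}-\frac{2n-1}{2m+1}\binom{n+m-1}{2m}=\frac{2}{(2m)!}\prod_{i=0}^{m-1}(n^{2}-i^{2}).
\end{equation*}
This I would verify by applying Pascal's rule $\binom{n+m}{2m}=\binom{n+m-1}{2m}+\binom{n+m-1}{2m-1}$ and writing $\prod_{i=0}^{m-1}(n^{2}-i^{2})=\bigl[n(n-1)\cdots(n-m+1)\bigr]\bigl[n(n+1)\cdots(n+m-1)\bigr]$ as a product of $2m$ consecutive integers, so that both sides become the same ratio of factorials. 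Feeding this evaluation back into the expression for $S_{2k}(n)$ and using $R(k,m)=\tfrac{(2m)!}{2}U(k,m)$ from \eqref{int2} makes the factorials cancel and produces exactly \eqref{pr21}; the formula \eqref{pr22} then follows by the reduction described above.

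The main obstacle will be the single-step difference, i.e.\ showing that $T_m(n)$ is a discrete antiderivative of $\tfrac{2}{(2m)!}\prod_{i=0}^{m-1}(x^{2}-i^{2})$. The computation is elementary but slightly delicate because of the two shifted binomial coefficients; I expect the smoothest path is to clear denominators and match the two sides as products of $2m$ consecutive integers, rather than juggling the binomial coefficients directly. Once this telescoping lemma is secured, the remainder of the proof is a routine substitution, and no further input beyond the central factorial expansion and Proposition \ref{pr:1} is needed.
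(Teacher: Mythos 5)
Your proposal is correct; both of its ingredients check out. The single-step difference identity is valid: clearing denominators, the two products of $2m$ consecutive integers share the factor $(n-m+1)\cdots(n+m-1)$, and the remaining bracket is $(2n+1)(n+m)-(2n-1)(n-m)=2n(2m+1)$, which yields exactly $\tfrac{2}{(2m)!}\prod_{i=0}^{m-1}(n^{2}-i^{2})$, so the telescoping argument goes through. Your route, however, differs from the paper's in two respects. For \eqref{pr21}, the paper does not start from the second-kind expansion $x^{2k}=\sum_{m}U(k,m)\prod_{i=0}^{m-1}(x^{2}-i^{2})$; instead it substitutes $x=i^{2}$ into the first-kind generating function \eqref{th11}, sums over $i$ using Lemma \ref{lm:6} (in the equivalent form \eqref{lm51}, proved by induction in the Appendix), and only then applies the central factorial inversion of Lemma \ref{lm:4}. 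Your version is the mirror image: the inversion is absorbed into the second-kind expansion you cite at the outset (which itself follows from \eqref{th11} together with Lemma \ref{lm:4}), and your ``crucial ingredient'' is precisely the paper's \eqref{lm51}, proved by essentially the same induction. So for \eqref{pr21} the mathematical content is nearly identical, just organized in the pre-inverted order. The genuine divergence is in \eqref{pr22}: the paper proves it independently by a parallel computation (divide \eqref{th11} at $x=i^{2}$ by $i$, sum using $\sum_{i=1}^{n}\binom{i+k-1}{2k-1}=\binom{n+k}{2k}$, and invert), whereas you obtain it for free from Proposition \ref{pr:1}. Your reduction is logically sound and more economical, but Proposition \ref{pr:1} is quoted in the paper from Knuth without proof; since the paper's stated aim is an alternative, self-contained derivation of Knuth's formulas, its independent treatment of \eqref{pr22} is deliberate. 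In short, the paper's approach buys independence from Knuth's equivalence, while yours buys brevity at the cost of leaning on it.
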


The rest of the paper is organized as follows. In Section \ref{sec:2}, we firstly prove the above formulas \eqref{th1}, \eqref{pr21}, and \eqref{pr22}. As in the case of \eqref{th1}, our proof of \eqref{pr21} and \eqref{pr22} relies on the respective binomial identity as well as on a pair of properties of the central factorial numbers with even indices. As a by-product, we obtain formulas for the power sums $T_{2k}(n) = \sum_{i=1}^n (2i -1)^{2k}$ and $\Omega_{2k} = \sum_{i=1}^n (-1)^{n-i} i^{2k}$. Furthermore, we derive a couple of formulas for the product $2^{2k-1} S_{2k-1}(n)$ involving the numbers $R(k,m)$ (Theorem \ref{th:9} and equation \eqref{odd2}, respectively). In Section~\ref{sec:3}, we deal with the so-called Faulhaber form of the power sums $S_{2k}(n)$ and $S_{2k+1}(n)$ and provide an alternative representation of the Faulhaber coefficients in terms on $R(k,m)$ and the Legendre-Stirling numbers of the first kind (Proposition \ref{prop:11}). We conclude the paper in Section \ref{sec:4} with some additional remarks.

Next we summarize the main results found in this paper, highlighting the close relationship between the integer sequence A304330 (with general term given by \eqref{exprkm}) and the three varieties of power sums considered in this paper, namely, $S_k(n) = 1^k + 2^k + \cdots + n^k$, $T_k(n) = 1^k + 3^k + \cdots + (2n-1)^k$, and $\Omega_k(n) = n^k - (n-1)^k + \cdots + (-1)^{n-1} 1^k$. (Note that the formulas for $S_{2k}(n)$ and $S_{2k-1}(n)$ were previously obtained in Knuth's paper \cite{knuth}.)

For integer $k \geq 1$, we have
\begin{align*}
2^{2k} S_{2k}(n) & = \sum_{m=1}^k R(k,m) \binom{2n+m+1}{2m+1},  \\
S_{2k}(n) & = \sum_{m=1}^k R(k,m) \, \frac{2n+1}{2m+1} \binom{n+m}{2m}, \\
T_{2k}(n) & = \sum_{m=1}^k R(k,m) \binom{2n+m}{2m+1}, \\
\Omega_{2k}(n) & = \sum_{m=1}^k R(k,m) \binom{n+m}{2m}, \\
S_{2k-1}(n) & = \sum_{m=1}^k R(k,m) \, \frac{1}{m} \binom{n+m}{2m}, \\[-2mm]
\intertext{and}
2^{2k-1} S_{2k-1}(n) & = \sum_{m=1}^k  Q_{k,m}(n) \binom{n+m}{2m-1},
\end{align*}
where
\begin{displaymath}
Q_{k,m}(n) = \begin{cases}
  n^{2k-1}, & \text{for $m=1$;} \\
  2 \, \displaystyle{\sum_{j=m}^k \binom{2k-1}{2j-2} R(j-1,m-1) n^{2k-2j+1}}, & \text{for $m \geq 2$.}
\end{cases}
\end{displaymath}

Furthermore, $S_{2k}(n)$ and $S_{2k+1}(n)$ can be expressed in the Faulhaber form $S_{2k}(n) = S_2(n) \sum_{r=1}^k b_{k,r} \big( S_1(n) \big)^{r-1}$ and $S_{2k+1}(n) = \big( S_1(n) \big)^2 \sum_{r=1}^k c_{k,r} \big( S_1(n) \big)^{r-1}$, with coefficients $b_{k,r}$ and $c_{k,r}$ given by
\begin{align*}
b_{k,r} & = \sum_{m=r}^k  \frac{3 \cdot 2^r}{(2m+1)!} \, R(k,m) Ps_{m}^{(r)}, \\[-3mm]
\intertext{and}
c_{k,r} & = \sum_{m=r}^k  \frac{2^{r+1}}{(2m+2)! (m+1)} \, R(k+1,m+1) Ps_{m+1}^{(r+1)},
\end{align*}
where $Ps_{m}^{(r)}$ are the Legendre-Stirling numbers of the first kind.

In addition, $\Omega_{2k}(n)$ can be expressed in the form $\Omega_{2k}(n) = \sum_{r=1}^k d_{k,r} \big( S_1(n) \big)^{r}$, with coefficients $d_{k,r}$ given by
\begin{equation*}
d_{k,r} = \sum_{m=r}^k  \frac{2^{r}}{(2m)!} \, R(k,m) Ps_{m}^{(r)}.
\end{equation*}

\section{Sums of powers of integers involving the sequence A304330}\label{sec:2}

In this section we successively prove the formulas for $2^{2k} S_{2k}(n)$, $S_{2k}(n)$, and $S_{2k-1}(n)$ stated in Theorem \ref{th:1} and Proposition \ref{pr:2} above. Then we obtain formulas for $T_{2k}(n)$ and $\Omega_{2k}(n)$, and derive a formula for $2^{2k-1}S_{2k-1}(n)$ as given in Theorem \ref{th:9}. Lastly, we present an alternative formula for $2^{2k-1}S_{2k-1}(n)$ in equation \eqref{odd2}.

\subsection{Proof of formula \eqref{th1}}

To prove \eqref{th1}, we need the following lemma.

\begin{lemma}\label{lm:3}
For integer $n \geq 1$, we have
\begin{equation*}
\sum_{i=1}^n (2i) \binom{2i+k-1}{2k-1} = k \binom{2n+k+1}{2k+1},
\end{equation*}
where $k$ is an arbitrary, fixed positive integer.
\end{lemma}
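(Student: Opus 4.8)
The plan is to prove the identity by exhibiting each summand as a telescoping difference. Specifically, I would guess the antidifference $f(i) = k\binom{2i+k+1}{2k+1}$ and aim to show that
\[
2i\binom{2i+k-1}{2k-1} = f(i) - f(i-1) = k\binom{2i+k+1}{2k+1} - k\binom{2i+k-1}{2k+1}
\]
holds for every $i \geq 1$. Once this per-term identity is in hand, the left-hand side of the lemma collapses to $f(n) - f(0) = k\binom{2n+k+1}{2k+1} - k\binom{k+1}{2k+1}$, and since $2k+1 > k+1$ for all $k \geq 1$, the boundary term $\binom{k+1}{2k+1}$ vanishes, leaving exactly the claimed value $k\binom{2n+k+1}{2k+1}$.

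To establish the per-term identity, I would write $N = 2i+k-1$ and apply Pascal's rule twice to expand $\binom{N+2}{2k+1} = \binom{N}{2k+1} + 2\binom{N}{2k} + \binom{N}{2k-1}$, so that $f(i) - f(i-1) = k\bigl(2\binom{N}{2k} + \binom{N}{2k-1}\bigr)$. Then I would invoke the absorption relation $2k\binom{N}{2k} = (N-2k+1)\binom{N}{2k-1}$ to combine the two terms into $(N-k+1)\binom{N}{2k-1}$. Since $N-k+1 = 2i$, this is precisely $2i\binom{2i+k-1}{2k-1}$, as required.

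Alternatively, the whole statement can be proved by a direct induction on $n$: the base case $n=1$ is a short check (verified separately for $k=1$, $k=2$, and $k\geq 3$, where both sides vanish), and the inductive step reduces, after substituting the induction hypothesis and setting $N = 2n+k+1$, to exactly the same algebraic identity $k\binom{N}{2k+1} + 2(n+1)\binom{N}{2k-1} = k\binom{N+2}{2k+1}$ handled above. I would favour the telescoping version, since it avoids the case analysis in the base step and makes the role of the vanishing binomial $\binom{k+1}{2k+1}$ transparent.

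The only genuinely creative step is guessing the correct antidifference $f(i)$; everything afterwards is a mechanical verification using Pascal's rule and the absorption identity. The main obstacle, such as it is, is purely bookkeeping---keeping the shifts in the upper and lower binomial indices straight---rather than conceptual.
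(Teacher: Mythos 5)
Your proof is correct: the per-term identity $2i\binom{2i+k-1}{2k-1} = k\binom{2i+k+1}{2k+1} - k\binom{2i+k-1}{2k+1}$ follows exactly as you say from two applications of Pascal's rule plus the absorption relation $2k\binom{N}{2k}=(N-2k+1)\binom{N}{2k-1}$ with $N=2i+k-1$, and the boundary term $k\binom{k+1}{2k+1}$ indeed vanishes for all $k\geq 1$, so the telescoping closes the argument. The paper proves the same statement, but in the equivalent rising-factorial form \eqref{lm31}, by induction on $n$ with a case split on the parity of $k$: the split is needed there because the first nonvanishing summand occurs at $i=k/2$ for even $k$ and at $i=(k+1)/2$ for odd $k$, so the base case of the induction sits at a parity-dependent starting index. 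The algebraic heart of the paper's inductive step is the same fact you verify---that the increment of the right-hand side equals the newly added term---but carried out by expanding products such as $(2m+1)(2m+2)+(2k+1)(2k+4m+4)=(2m+2k+2)(2m+2k+3)$ rather than by binomial identities. Your packaging buys a cleaner proof: the parity case analysis disappears entirely (the vanishing binomial $\binom{k+1}{2k+1}=0$ silently absorbs the zero initial terms), the base case is uniform, and the verification uses only two standard moves. What the paper's version buys is that the product form \eqref{lm31} is the shape actually consumed later in the proof of Theorem \ref{th:1} (where it is matched against the factored generating function \eqref{th11}), so no translation between the binomial and factorial forms is needed at the point of use.
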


For later use we put the above binomial identity in the equivalent form
\begin{equation}\label{lm31}
\sum_{i=1}^n 4i (2i-k+1)(2i-k+2)\cdots (2i+k-1) = \frac{(2n-k+1)(2n-k+2) \cdots (2n+k+1)}{2k+1},
\end{equation}
and refer the reader to the Appendix for a proof of \eqref{lm31}.

For the proof of \eqref{th1}, we make use of two well-known properties of the (signed) central factorial numbers with even indices of the first kind $t(2k,2m)$ (OEIS A204579). In what follows we denote these numbers by $u(k,m)$, in accordance with the notation of Gelineau and Zeng \cite{gelineau}. The first property is the definition of $u(k,m)$ through the generating function
\begin{equation}\label{th11}
\sum_{m=1}^k u(k,m) x^m = x (x -1^2) (x -2^2) \cdots (x - (k-1)^2 ), \quad k \geq 1,
\end{equation}
(see, e.g., \cite[Equation (4.15)]{coffey} and Equation (11) (with $r=0$) of \cite{shiha}). The second property of interest is the central factorial inversion, which is a direct consequence of the orthogonality relations $\sum_{m=i}^k u(k,m) U(m,i) = \sum_{m=i}^k U(k,m) u(m,i) = \delta_{k,i}$ ($\delta_{k,i}$ being the Kronecker delta). This property is stated in the following lemma.

\begin{lemma}[{{\cite[Proposition 2.3]{butzer}}}]\label{lm:4}
If $(a_k )_{k \geq 1}$ and $(b_k )_{k \geq 1}$ are two sequences of real numbers (where we tacitly assume that $a_0 = b_0 =0$), there holds the inversion formula
\begin{equation*}
a_k = \sum_{m=1}^k u(k,m) b_m  \,\, \Longleftrightarrow \,\, b_k = \sum_{m=1}^k U(k,m) a_m, \quad k \geq 1.
\end{equation*}
\end{lemma}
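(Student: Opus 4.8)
The plan is to derive the inversion as an immediate consequence of the orthogonality relations $\sum_{m=i}^k u(k,m) U(m,i) = \sum_{m=i}^k U(k,m) u(m,i) = \delta_{k,i}$ recalled just before the statement, viewing $(u(k,m))$ and $(U(k,m))$ as mutually inverse lower-triangular arrays. Because the two directions of the biconditional are related by interchanging the roles of $u$ and $U$, it suffices to carry out one computation carefully and then repeat it verbatim with the symbols swapped.

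First I would assume $a_k = \sum_{m=1}^k u(k,m) b_m$ for every $k \geq 1$ and substitute this into the right-hand side of the target formula for $b_k$, obtaining the double sum $\sum_{m=1}^k U(k,m) \sum_{i=1}^m u(m,i) b_i$. The crucial step is to interchange the order of summation: since $u(m,i)$ vanishes for $i > m$, the summation region is the triangle $1 \le i \le m \le k$, so after swapping I get $\sum_{i=1}^k b_i \sum_{m=i}^k U(k,m) u(m,i)$. Applying the orthogonality relation $\sum_{m=i}^k U(k,m) u(m,i) = \delta_{k,i}$ collapses the inner sum to a single nonzero term at $i=k$, leaving exactly $b_k$, which is the claimed formula.

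For the converse I would run the identical argument with $u$ and $U$ exchanged: assuming $b_k = \sum_{m=1}^k U(k,m) a_m$, I substitute into $\sum_{m=1}^k u(k,m) b_m$, interchange the order of summation over the same triangle $1 \le i \le m \le k$, and invoke the companion relation $\sum_{m=i}^k u(k,m) U(m,i) = \delta_{k,i}$ to recover $a_k$. The tacit hypothesis $a_0 = b_0 = 0$ plays only a bookkeeping role, anchoring the two sequences so that the $k \geq 1$ indexing is consistent and no $m=0$ terms intrude.

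I expect no genuine obstacle here, since the orthogonality relations are taken as given; the only point requiring attention is the careful handling of the summation bounds in the swap, which rests entirely on the lower-triangular structure of the two arrays. If one wanted a self-contained argument, the substantive work would instead lie in \emph{establishing} those orthogonality relations from the generating-function definition \eqref{th11} of $u(k,m)$ together with the analogous generating function for $U(k,m)$ — but that is precisely the content being imported from Butzer et al.
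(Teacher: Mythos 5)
Your proof is correct and follows exactly the route the paper itself takes: the paper presents this lemma as a direct consequence of the orthogonality relations $\sum_{m=i}^k u(k,m) U(m,i) = \sum_{m=i}^k U(k,m) u(m,i) = \delta_{k,i}$ (citing Butzer et al.\ rather than writing out the details), and your substitute--swap--collapse computation is precisely the routine verification of that claim, carried out correctly in both directions. Nothing is missing; the only imported ingredient, the orthogonality itself, is the same one the paper imports.
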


Therefore, setting $x =(2i)^2$ in \eqref{th11} and multiplying through by $2$ gives
\begin{align*}
\sum_{m=1}^k u(k,m) 2^{2m+1} i^{2m} & = 8 i^2 ((2i)^2 -1^2) ((2i)^2 -2^2 ) \cdots
((2i)^2 - (k-1)^2 )   \\
& = 4i (2i-k+1) (2i-k+2) \cdots (2i+k-1).
\end{align*}
Furthermore, summing from $i = 1$ to $n$ on both sides of the last equation and employing \eqref{lm31} in the right-hand side, we get
\begin{equation*}
\sum_{m=1}^k u(k,m) 2^{2m+1} S_{2m}(n) = (2k)! \binom{2n+k+1}{2k+1}.
\end{equation*}
This allows us to apply Lemma \ref{lm:4} with $a_k = (2k)! \binom{2n+k+1}{2k+1}$ and $b_m = 2^{2m+1} S_{2m}(n)$ to finally obtain the following formula for $2^{2k} S_{2k}(n)$:
\begin{equation*}
2^{2k} S_{2k}(n) = \frac{1}{2} \sum_{m=1}^k U(k,m) (2m)! \binom{2n+m+1}{2m+1},
\end{equation*}
which, in view of \eqref{int2}, is just the formula \eqref{th1}.

As a result we find the following representation of $R(k,m)$, which follows straightforwardly by comparing the formulas \eqref{int1} and \eqref{th1}.
\begin{corollary}
For positive integers $k$ and $m$, with $1 \leq m \leq k$, we have
\begin{equation}\label{col5}
R(k,m) = 2^{2k+m-1} m! \sum \frac{(2k)!}{b_1! b_2! \cdots b_k!}
\prod_{r=1}^{k} \biggl( \frac{1}{4^r (2r)!} \biggl)^{b_r},
\end{equation}
where the summation extends over all $k$-tuples of nonnegative integers $(b_1,b_2,\ldots,b_k)$ fulfilling the conditions $b_1 + 2b_2 + \cdots + k b_k = k$ and $b_1 + b_2 +\cdots +b_k = m$.
\end{corollary}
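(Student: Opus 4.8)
The plan is to obtain \eqref{col5} by simply matching coefficients in two different expansions of the same polynomial $2^{2k}S_{2k}(n)$. Formula \eqref{th1}, just established, expands $2^{2k}S_{2k}(n)$ in the binomials $\binom{2n+m+1}{2m+1}$ with coefficients $R(k,m)$; multiplying the cited formula \eqref{int1} by $2^{2k}$ produces a second expansion in the very same binomials. The two expansions must therefore agree term by term.

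First I would multiply \eqref{int1} through by $2^{2k}$ and regroup its multi-index summation. In \eqref{int1} the tuples $(b_1,\ldots,b_k)$ range over those satisfying $b_1 + 2b_2 + \cdots + kb_k = k$, and each tuple contributes to the binomial with index $m = b_1 + b_2 + \cdots + b_k$. Partitioning the tuples according to the value of $m$ (which, under the constraint $b_1 + 2b_2 + \cdots + kb_k = k$, ranges over $\{1,2,\ldots,k\}$), the total coefficient of $\binom{2n+m+1}{2m+1}$ collects the numerical factor $\tfrac{1}{2}\cdot 2^{2k}\cdot 2^m m! = 2^{2k+m-1}m!$ together with the inner sum $\sum \frac{(2k)!}{b_1!\cdots b_k!}\prod_{r=1}^{k}\bigl(\frac{1}{4^r(2r)!}\bigr)^{b_r}$, now taken over the tuples subject to the two constraints $b_1 + 2b_2 + \cdots + kb_k = k$ and $b_1 + b_2 + \cdots + b_k = m$.

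To justify reading off the coefficients, I would note that the polynomials $\binom{2n+m+1}{2m+1}$ for $m = 1,\ldots,k$ have pairwise distinct degrees $2m+1$ in $n$ and hence form a linearly independent family over $\mathbb{Q}$. Since both expansions represent the same polynomial in $n$, their coefficients coincide, and equating the coefficient of $\binom{2n+m+1}{2m+1}$ in the two expansions yields precisely \eqref{col5}.

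This argument is entirely mechanical; the only points deserving care — and the closest thing to an obstacle — are the regrouping of the multi-index sum by the value of $m$ and the observation that $m$ exhausts $\{1,\ldots,k\}$, so that every coefficient appearing in \eqref{th1} is matched by a nonempty inner sum in the regrouped version of \eqref{int1}.
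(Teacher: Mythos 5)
Your proposal is correct and is exactly the paper's argument: the corollary is stated there as following ``straightforwardly by comparing the formulas \eqref{int1} and \eqref{th1},'' which is precisely your coefficient-matching. Your elaboration---regrouping the multi-index sum by $m=b_1+\cdots+b_k$, collecting the factor $\tfrac{1}{2}\cdot 2^{2k}\cdot 2^m m! = 2^{2k+m-1}m!$, and invoking linear independence of the polynomials $\binom{2n+m+1}{2m+1}$ (distinct degrees $2m+1$) to equate coefficients---supplies the details the paper leaves implicit.
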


Using the above representation, we get the following diagonals $R(k,k-s)$ for $s =0,1,2,3,4$:
\begin{align*}
R(k,k) & = \frac{1}{2} (2k)!,  \quad k \geq 1, \\
R(k,k-1) & = \frac{k-1}{24} (2k)!,  \quad k \geq 2, \\
R(k,k-2) & = \frac{(k-2)(5k-11)}{2880} (2k)!,  \quad k \geq 3, \\
R(k,k-3) & = \frac{(k-3)(35k^2 -231k +382)}{725760} (2k)!,  \quad k \geq 4, \\
R(k,k-4) & = \frac{(k-4)(175k^3 -2310k^2 + 10181k -14982)}{174182400} (2k)!, \quad k \geq 5.
\end{align*}

From the above particular cases, we can guess the general pattern
\begin{equation*}
R(k,k-s) = (2k)! (k-s) P_s(k), \,\,\, \text{for} \,\,\, k \geq s+1 \,\,\, \text{and} \,\,\, s\geq 1,
\end{equation*}
where $P_s(k)$ is a polynomial in $k$ of degree $s-1$ with coefficients having alternating signs.

\subsection{Proof of formulas \eqref{pr21} and \eqref{pr22}}

Next we prove \eqref{pr21}. For this, we need the following lemma.

\begin{lemma}\label{lm:6}
For integer $n \geq 1$, we have
\begin{equation*}
\sum_{i=1}^n i \binom{i+k-1}{2k-1} = \frac{k(2n+1)}{2k+1} \binom{n+k}{2k},
\end{equation*}
where $k$ is an arbitrary, fixed positive integer.
\end{lemma}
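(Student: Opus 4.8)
The plan is to prove this binomial identity directly by reducing the weighted sum to two ordinary hockey-stick sums. First I would eliminate the awkward factor $i$ by writing $i = (i+k) - k$, which splits the summand as
\[
i\binom{i+k-1}{2k-1} = (i+k)\binom{i+k-1}{2k-1} - k\binom{i+k-1}{2k-1}.
\]
The first term invites the absorption identity $r\binom{m}{r} = m\binom{m-1}{r-1}$ (with $m = i+k$ and $r = 2k$), which converts $(i+k)\binom{i+k-1}{2k-1}$ into $2k\binom{i+k}{2k}$. Hence the summand becomes $2k\binom{i+k}{2k} - k\binom{i+k-1}{2k-1}$, a difference of two plain binomial coefficients carrying no linear weight.

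Next I would sum each piece from $i=1$ to $n$ using the hockey-stick identity $\sum_{j=r}^{M}\binom{j}{r} = \binom{M+1}{r+1}$. The low-index terms vanish because the upper index of the binomial falls below its lower index, so after reindexing ($j = i+k$ in the first sum, $j = i+k-1$ in the second) the summation ranges align cleanly and yield
\[
\sum_{i=1}^n\binom{i+k}{2k} = \binom{n+k+1}{2k+1}, \qquad \sum_{i=1}^n\binom{i+k-1}{2k-1} = \binom{n+k}{2k}.
\]
This reduces the left-hand side to $2k\binom{n+k+1}{2k+1} - k\binom{n+k}{2k}$.

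The final step is to show that this expression equals the claimed right-hand side. Factoring out $\binom{n+k}{2k}$ and using the ratio $\binom{n+k+1}{2k+1}\big/\binom{n+k}{2k} = (n+k+1)/(2k+1)$, the bracketed factor collapses via the arithmetic simplification $2(n+k+1) - (2k+1) = 2n+1$ to exactly $k(2n+1)/(2k+1)$, completing the proof. I expect no serious obstacle here; the only points needing care are the bookkeeping of the hockey-stick ranges (verifying that the vanishing low-index terms make the two sums line up) and the cancellation that produces the factor $2n+1$. An equally viable alternative would be a telescoping argument: set $G(n) = \frac{k(2n+1)}{2k+1}\binom{n+k}{2k}$ and verify directly that $G(n) - G(n-1) = n\binom{n+k-1}{2k-1}$ with $G(0) = 0$, the key cancellation then being $(2n+1)(n+k) - (2n-1)(n-k) = 2n(2k+1)$.
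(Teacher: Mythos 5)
Your proof is correct, but it takes a genuinely different route from the paper's. The paper first recasts the identity in the equivalent product form
\begin{equation*}
\sum_{i=1}^n 2i (i-k+1)(i-k+2)\cdots (i+k-1) = \frac{(2n+1)(n-k+1)(n-k+2)\cdots(n+k)}{2k+1},
\end{equation*}
and proves this in the Appendix by induction on $n$: both sides vanish for $n<k$, the case $n=k$ is checked directly, and the inductive step is an algebraic manipulation of the products. Your argument instead evaluates the sum in closed form: the split $i=(i+k)-k$ plus absorption turns the summand into $2k\binom{i+k}{2k}-k\binom{i+k-1}{2k-1}$, two hockey-stick summations give $2k\binom{n+k+1}{2k+1}-k\binom{n+k}{2k}$, and the identity $\binom{n+k+1}{2k+1}=\frac{n+k+1}{2k+1}\binom{n+k}{2k}$ (which holds identically in $n$, so no separate treatment of the degenerate range $n<k$ is needed) collapses this to the stated right-hand side; I verified the arithmetic $2(n+k+1)-(2k+1)=2n+1$ and the range bookkeeping, and both are sound. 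What each approach buys: yours is constructive, deriving the closed form from two standard identities rather than verifying a guessed answer, and it reuses the same identity $\sum_{i=1}^n\binom{i+k-1}{2k-1}=\binom{n+k}{2k}$ that the paper itself invokes (from Spivey) in the proof of \eqref{pr22}; the paper's induction, by contrast, keeps the Appendix self-contained and handles Lemmas \ref{lm:3} and \ref{lm:6} uniformly by the same mechanical scheme, which is convenient since the analogue of your absorption trick is messier for the even-argument sum in Lemma \ref{lm:3}. Your telescoping alternative, resting on the cancellation $(2n+1)(n+k)-(2n-1)(n-k)=2n(2k+1)$, is essentially the paper's induction in disguise, so either variant would serve.
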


Likewise, for later use we put the above binomial identity in the equivalent form
\begin{equation}\label{lm51}
\sum_{i=1}^n 2i (i-k+1)(i-k+2)\cdots (i+k-1) = \frac{(2n+1)(n-k+1)(n-k+2) \cdots (n+k)}{2k+1},
\end{equation}
and refer the reader to the Appendix for a proof of \eqref{lm51}.

Now, setting $x =i^2$ in \eqref{th11} and summing from $i = 1$ to $n$ on both sides gives
\begin{equation*}
\sum_{m=1}^k u(k,m) S_{2m}(n) = \frac{1}{2} \sum_{i=1}^n 2i (i-k+1) (i-k+2) \cdots (i+k-1).
\end{equation*}
By virtue of \eqref{lm51}, this can be expressed as
\begin{equation*}
\sum_{m=1}^k u(k,m) S_{2m}(n) = \frac{(2k)!}{2} \, \frac{2n+1}{2k+1} \binom{n+k}{2k}.
\end{equation*}
Thus, taking $a_k = \frac{(2k)!}{2} \, \frac{2n+1}{2k+1} \binom{n+k}{2k}$ and $b_m = S_{2m}(n)$ in Lemma \ref{lm:4}, we obtain the following formula for $S_{2k}(n)$:
\begin{equation*}
S_{2k}(n) = \sum_{m=1}^k U(k,m) \frac{(2m)!}{2} \, \frac{2n+1}{2m+1} \binom{n+m}{2m},
\end{equation*}
which, by relation \eqref{int2}, is the same as formula \eqref{pr21}.

On the other hand, to show \eqref{pr22}, set $x =i^2$ in \eqref{th11} and divide both sides by $i$. This gives
\begin{equation*}
\sum_{m=1}^k u(k,m)  i^{2m-1} = (i-k+1) (i-k+2) \cdots (i+k-1).
\end{equation*}
Thus, summing over $i$ on both sides of the above equation and making use of the well-known binomial identity $\sum_{i=1}^n \binom{i+k-1}{2k-1} = \binom{n+k}{2k}$ (see \cite[Identity 58]{spivey}), we  get
\begin{equation*}
\sum_{m=1}^k u(k,m) S_{2m-1}(n) = (2k-1)! \binom{n+k}{2k}.
\end{equation*}
Finally, by inverting the equation above, we recover the known formula \cite[p.\ 284]{knuth}
\begin{equation*}
S_{2k-1}(n) = \sum_{m=1}^k (2m-1)! U(k,m) \binom{n+m}{2m},
\end{equation*}
which can be put in the form \eqref{pr22} upon using \eqref{int2}.

\begin{remark}
Formula \eqref{pr21} can be derived directly from \eqref{pr22} and the well-known recursive formula (see, e.g., \cite[Proposition 1]{ziel})
\begin{equation*}
S_k(n) = (n+1) S_{k-1}(n) - \sum_{i=1}^n S_{k-1}(i).
\end{equation*}
Indeed, putting $2k$ instead of $k$ in the above equation and substituting $S_{2k-1}(n)$ and $S_{2k-1}(i)$ from \eqref{pr22} gives
\begin{align*}
S_{2k}(n) & = (n+1)\sum_{m=1}^k  R(k,m) \, \frac{1}{m} \binom{n+m}{2m}
- \sum_{m=1}^k  R(k,m) \, \frac{1}{m} \sum_{i=1}^n \binom{i+m}{2m} \\
& = \sum_{m=1}^k  R(k,m) \, \frac{1}{m} \biggl( (n+1) \binom{n+m}{2m}
- \frac{n+m+1}{2m+1} \binom{n+m}{2m} \biggl) \\
& = \sum_{m=1}^k  R(k,m) \, \frac{2n+1}{2m+1} \binom{n+m}{2m}.
\end{align*}
\end{remark}

\subsection{Formulas for $T_{2k}(n)$ and $\Omega_{2k}(n)$}

For integer $k \geq 1$, the sum of the $k$th powers of the first $n$ odd integers $T_k(n) = 1^k + 3^k + \cdots + (2n-1)^k$ can be expressed as $T_k(n) = S_k(2n) - 2^k S_k(n)$. Consequently, for the case of even powers, from \eqref{th1} and \eqref{pr21} we have
\begin{equation*}
T_{2k}(n) = \sum_{m=1}^k R(k,m) \, \frac{4n+1}{2m+1} \binom{2n+m}{2m}
- \sum_{m=1}^k R(k,m) \binom{2n+m+1}{2m+1}.
\end{equation*}
Furthermore, since
\begin{equation*}
\frac{4n+1}{2m+1} \binom{2n+m}{2m} - \frac{2n+m+1}{2m+1} \binom{2n+m}{2m} = \frac{2n-m}{2m+1}
\binom{2n+m}{2m} = \binom{2n+m}{2m+1},
\end{equation*}
this becomes
\begin{equation}\label{oddi}
T_{2k}(n) = \sum_{m=1}^k R(k,m) \binom{2n+m}{2m+1}.
\end{equation}

\begin{remark}
Merca \cite[Equation (3.1)]{merca} proved that $T_{2k}(n) = \frac{2^{2k}}{2k+1} B_{2k+1}\big( n +\frac{1}{2} \big)$, where $B_k(n)$ is the $k$th Bernoulli polynomial evaluated at $n$. Thus, letting $n \to n - \frac{1}{2}$ in Merca's result, and using \eqref{oddi}, gives the identity
\begin{equation*}
B_{2k+1}(n) = \frac{2k+1}{2^{2k}} \sum_{m=1}^k R(k,m) \binom{2n+m-1}{2m+1}, \quad k \geq 1.
\end{equation*}
Note that this identity can also be obtained directly from \eqref{th1} and the well-known relationship $S_{2k}(n) = \frac{1}{2k+1} ( B_{2k+1}(n+1) - B_{2k+1})$, once one realizes that $B_{2k+1} =0$ for all $k \geq 1$.
\end{remark}

On the other hand, for integer $k \geq 1$, define the alternating sum of the $k$th powers of the first $n$ integers as
\begin{equation*}
\Omega_k(n) = n^k - (n-1)^k + \cdots + (-1)^{n-1} 1^k.
\end{equation*}
It is easily seen that $\Omega_k(2n) = S_k(2n) - 2T_k(n)$ and $\Omega_k(2n-1) = 2T_k(n) - S_k(2n-1)$. Hence, for the case of even powers, from \eqref{pr21} and \eqref{oddi} we find that $\Omega_{2k}(2n) = \sum_{m=1}^k R(k,m) \binom{2n+m}{2m}$ and $\Omega_{2k}(2n-1) = \sum_{m=1}^k R(k,m) \binom{2n+m-1}{2m}$. This means that
\begin{equation}\label{alter}
\Omega_{2k}(n) = \sum_{m=1}^k R(k,m) \binom{n+m}{2m},
\end{equation}
holds for all $k,n \geq 1$. In particular, $\Omega_2(n) = \binom{n+1}{2}$. See equation \eqref{vien1} below for an alternative representation of $\Omega_{2k}(n)$ as a polynomial in $S_1(n)$ of degree $k$.

\subsection{Corresponding formulas for $2^{2k-1}S_{2k-1}(n)$}

An explicit formula for $2^{2k-1} S_{2k-1}(n)$ involving the numbers $R(k,m)$ is given by the following theorem.
\begin{theorem}\label{th:9}
For integer $k \geq 1$, we have
\begin{equation}\label{th9}
2^{2k-1} S_{2k-1}(n) = \sum_{m=1}^k  Q_{k,m}(n) \binom{n+m}{2m-1},
\end{equation}
where $Q_{k,m}(n)$ is an odd polynomial in $n$ of degree $2k-2m+1$ given by
\begin{displaymath}
Q_{k,m}(n) = \begin{cases}
  n^{2k-1}, & \text{for $m=1$;} \\
  2 \, \displaystyle{\sum_{j=m}^k \binom{2k-1}{2j-2} R(j-1,m-1) n^{2k-2j+1}}, & \text{for $m \geq 2$.}
\end{cases}
\end{displaymath}
\end{theorem}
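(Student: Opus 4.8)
The plan is to reduce everything to the elementary rewriting
\[
2^{2k-1}S_{2k-1}(n) = \sum_{j=1}^n (2j)^{2k-1} = \sum_{j=0}^n (2j)^{2k-1},
\]
and then to exploit the fact that the even integers $0,2,4,\dots,2n$ are symmetric about their midpoint $n$. First I would write $2j = n-(n-2j)$ and expand by the binomial theorem, obtaining
\[
2^{2k-1}S_{2k-1}(n) = \sum_{p=0}^{2k-1}\binom{2k-1}{p}(-1)^p n^{2k-1-p}\sum_{j=0}^n (n-2j)^p .
\]
Since the values $n-2j$ for $0\le j\le n$ are symmetric about $0$, the inner sum vanishes for every odd $p$, so only the even exponents $p=2q$ survive. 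This single step already produces the factors $\binom{2k-1}{2q}$ and the powers $n^{2k-1-2q}$ that reappear in $Q_{k,m}$ after reindexing, and it makes transparent why $Q_{k,m}$ must be an odd polynomial in $n$ of degree $2k-2m+1$.

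The core computation is then the evaluation of the even moments $V_{2q}(n):=\sum_{j=0}^n (n-2j)^{2q}$. For $q=0$ one has $V_0(n)=n+1$, which is exactly what will generate the exceptional term $Q_{k,1}(n)=n^{2k-1}$, paired with $\binom{n+1}{1}=n+1$. For $q\ge 1$, pairing the index $j$ with $n-j$ shows that $V_{2q}(n)$ equals twice the sum of the $2q$-th powers of the positive even integers up to $2n$; evaluating this at any even $n=2N$ gives $V_{2q}(2N)=2\sum_{l=1}^{N}(2l)^{2q}=2^{2q+1}S_{2q}(n/2)$. Since $V_{2q}(n)$ and the Faulhaber polynomial $2^{2q+1}S_{2q}(n/2)$ are both polynomials in $n$ that agree at infinitely many points (all even $n$), they coincide identically. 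I would then feed this into the already-proven formula \eqref{th1}, with $n$ replaced by $n/2$ and $k$ by $q$, to get $2^{2q}S_{2q}(n/2)=\sum_{m=1}^q R(q,m)\binom{n+m+1}{2m+1}$, hence
\[
V_{2q}(n) = 2\sum_{m=1}^q R(q,m)\binom{n+m+1}{2m+1}, \qquad q\ge 1 .
\]

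Substituting these evaluations back into the symmetric expansion yields
\[
2^{2k-1}S_{2k-1}(n) = n^{2k-1}(n+1) + 2\sum_{q=1}^{k-1}\binom{2k-1}{2q}\,n^{2k-1-2q}\sum_{m=1}^{q} R(q,m)\binom{n+m+1}{2m+1}.
\]
To reach the stated form I would shift the binomial index, writing $\binom{n+m+1}{2m+1}=\binom{n+m'}{2m'-1}$ with $m'=m+1$, set $j=q+1$ so that $\binom{2k-1}{2q}=\binom{2k-1}{2j-2}$ and $R(q,m)=R(j-1,m'-1)$, and then interchange the order of summation so that each $\binom{n+m'}{2m'-1}$ is pulled outside. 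Collecting the coefficient of $\binom{n+m}{2m-1}$ gives precisely $2\sum_{j=m}^{k}\binom{2k-1}{2j-2}R(j-1,m-1)n^{2k-2j+1}=Q_{k,m}(n)$ for $m\ge 2$, while the $q=0$ contribution $n^{2k-1}(n+1)$ supplies the $m=1$ case, which is exactly \eqref{th9}.

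The one genuinely delicate point is the middle step: because $V_{2q}(n)$ is defined by a sum whose natural description depends on the parity of $n$, one must be careful to identify it with a single polynomial. I would handle this cleanly by the ``agreement at all even $n$ forces a polynomial identity'' argument sketched above, which avoids any separate analysis of odd $n$ and lets me pass directly to the polynomial identity \eqref{th1}. Everything after that — the index shift $m\mapsto m-1$, the substitution $q=j-1$, and the interchange of the two summations — is routine bookkeeping, so the parity/polynomial justification for $V_{2q}$ is where the real content of the proof lies.
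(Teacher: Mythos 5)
Your proof is correct, but it takes a genuinely different route from the paper's. The paper proves Theorem \ref{th:9} in two lines: it quotes an external result, \cite[Theorem 3]{cere}, which already writes $2^{2k-1}S_{2k-1}(n)$ as $(n+1)n^{2k-1}$ plus a double sum over multinomial tuples, and then recognizes the inner multinomial sums as the numbers $R(j,m)$ via the representation \eqref{col5}; the only remaining work is essentially the same index bookkeeping you perform at the end. Your argument instead stays entirely inside the paper: you write $2^{2k-1}S_{2k-1}(n)=\sum_{j=0}^n(2j)^{2k-1}$, expand $2j=n-(n-2j)$, kill the odd moments by the symmetry of the multiset $\{n-2j\}$ under negation, and evaluate the even moments $V_{2q}(n)$ by reduction to the already-proven formula \eqref{th1} at $n/2$. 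This buys self-containedness — no reliance on the unpublished preprint \cite{cere}, which is also the hidden source of \eqref{col5} via \eqref{int1} — and it explains structurally why $Q_{k,m}(n)$ is odd of degree $2k-2m+1$, a feature that in the paper is only an after-the-fact observation. Two small points to tighten: (i) your polynomial-identity step tacitly assumes that $V_{2q}(n)=\sum_{j=0}^n(n-2j)^{2q}$ is a polynomial in $n$; make this explicit by expanding $(n-2j)^{2q}$ binomially, so that $V_{2q}(n)$ is a finite combination of the power-sum polynomials $\sum_{j=0}^n j^i$ — or avoid the density argument altogether by checking odd $n$ directly, since $V_{2q}(2N-1)=2T_{2q}(N)$ and formula \eqref{oddi} (proven earlier in the paper) gives exactly the required value $2\sum_{m=1}^q R(q,m)\binom{2N+m}{2m+1}$; (ii) the phrase ``positive even integers up to $2n$'' should read ``up to $n$''; the displayed formula $V_{2q}(2N)=2^{2q+1}S_{2q}(n/2)$ that follows it is the correct statement.
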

\begin{proof}
Formula \eqref{th9} follows readily by combining the representation of $R(k,m)$ in \eqref{col5} and the following formula for $2^{2k-1} S_{2k-1}(n)$ given in \cite[Theorem 3]{cere}, namely
\begin{multline*}
2^{2k-1} S_{2k-1}(n) = (n+1) n^{2k-1} + \sum_{j=1}^{k-1} 4^j n^{2k-2j-1} \binom{2k-1}{2j}
\sum \frac{(2j)!}{b_1! b_2! \cdots b_j!}  \\
\times \, \prod_{r=1}^{j} \biggl( \frac{1}{4^r (2r)!} \biggl)^{b_r} 2^m m! \binom{n+m+1}{2m+1},
\end{multline*}
where, for each $j =1,2,\ldots,k-1$, the rightmost summation is taken over all \mbox{$j$-tuples} of nonnegative integers $(b_1,b_2,\ldots,b_j)$ satisfying the constraint $b_1 + 2b_2 + \cdots + j b_j = j$, and where $m = b_1 + b_2 +\cdots +b_j$.
\end{proof}

It is to be noted that formula \eqref{th9} above actually involves a double summation, which makes it more complicated than its counterpart \eqref{th1}. We believe, however, that the structure of such a formula is interesting in its own right. In this respect, it is worth to emphasize the striking odd-parity property of the polynomials $Q_{k,m}(n)$. As an example, for $k=5$, we have
\begin{align*}
Q_{5,1}(n) & = n^9, \,\,\, Q_{5,2}(n) = 72n^7 + 252n^5 +168n^3 + 18n, \\[1mm]
Q_{5,3}(n) & = 3024 n^5 + 10080n^3 +4536n, \,\,\, Q_{5,4}(n)= 60480 n^3 + 90720n, \,\,\, Q_{5,5}(n)= 362880 n,
\end{align*}
from which it follows that
\begin{align*}
2^9 S_9(n) & = n^9 \binom{n+1}{1} + (72n^7 + 252n^5 +168n^3 + 18n) \binom{n+2}{3} \\
& + (3024 n^5 + 10080n^3 +4536n) \binom{n+3}{5} \\
& + (60480 n^3 + 90720n) \binom{n+4}{7} + 362880 n \binom{n+5}{9}.
\end{align*}

On the other hand, by setting $x =(2i)^2$ in \eqref{th11}, dividing through by $2i$, summing over $n$, and inverting the resulting equation, one arrives at the following formula for $2^{2k-1} S_{2k-1}(n)$:
\begin{equation}\label{odd2}
2^{2k-1} S_{2k-1}(n) = \sum_{m=1}^k  R(k,m) \, \frac{F_m(n)}{m},
\end{equation}
where
\begin{equation*}
F_m(n) = \sum_{i=1}^n \binom{2i+m-1}{2m-1},
\end{equation*}
is a polynomial in $n$ of degree $2m$. The first few of these polynomials are $F_1(n) = 2\binom{n+1}{2}$ and
\begin{align*}
F_2(n) & = \frac{1}{3} (2n^2 +2n -1) \binom{n+1}{2}, \\
F_3(n) & = \frac{2}{15} (8n^2 +8n -3) \binom{n+2}{4}, \\
F_4(n) & = \frac{1}{105} (8n^4 +16n^3 -24n^2 -32n +9) \binom{n+2}{4}, \\
F_5(n) & = \frac{2}{315} (16n^4 +32n^3 -46n^2 -62n +15) \binom{n+3}{6}, \\
F_6(n) & = \frac{1}{10395} (32n^6 +96n^5 -304n^4 -768n^3 +677n^2 +1077n -225) \binom{n+3}{6}, \\
F_7(n) & = \frac{2}{135135} (256n^6 +768n^5 -2400n^4 -6080n^3 +5168n^2 +8336n -1575) \binom{n+4}{8},
\end{align*}
from which it can be inferred that $F_m(n)$ factorizes as
\begin{equation*}
F_m(n) = G_m(n) \binom{n + \lfloor \frac{m+1}{2} \rfloor}{2 \lfloor \frac{m+1}{2} \rfloor}, \quad m \geq 1,
\end{equation*}
where $\lfloor \cdot \rfloor$ denotes the floor function and $G_m(n)$ is a polynomial in $n$ of degree $2m - 2 \lfloor \frac{m+1}{2} \rfloor$. Unfortunately, there appears to be no recognizable closed-form expression for $G_m(n)$.

For comparison, we write down the expression of $2^9 S_9(n)$ that is obtained from \eqref{odd2}
\begin{align*}
2^9 S_9(n) & = (340n^2 +340n -168) \binom{n+1}{2} \\
& + (11520 n^4 + 23040n^3 -15744n^2 -27264n +5904) \binom{n+2}{4} \\
& + (36864 n^4 + 73728n^3 -105984n^2 -142848n +34560)\binom{n+3}{6}.
\end{align*}

\section{Faulhaber form of $S_{2k}(n)$ and $S_{2k+1}(n)$}\label{sec:3}

Edwards' seminal work \cite{edw} triggered an intensive research on the so-called Faulhaber polynomials, named after the German mathematician Johann Faulhaber (1580--1635); see, e.g., \cite{bazso}, \cite[Section 3]{beardon}, \cite{chen,coffey}, \cite[Section 12]{gessel}, and \cite{kellner,knuth,krishna,ziel}). As is well-known, the power sums $S_{2k}(n)$ and $S_{2k+1}(n)$ (with $k \geq 1$) can be expressed in the Faulhaber form as
\begin{align}
S_{2k}(n) & = S_2(n)  \big[ b_{k,1} + b_{k,2} S_1(n) + b_{k,3} \big( S_1(n) \big)^2 + \cdots + b_{k,k} \big( S_1(n)
\big)^{k-1} \big],  \label{fau1}  \\[-2mm]
\intertext{and}
S_{2k+1}(n) & =  \big ( S_1(n) \big)^2  \big[ c_{k,1} + c_{k,2} S_1(n) + c_{k,3} \big( S_1(n) \big)^2 + \cdots + c_{k,k}
\big( S_1(n) \big)^{k-1} \big], \label{fau2}
\end{align}
where the Faulhaber coefficients $b_{k,r}$ and $c_{k,r}$, $r =1,2,\ldots,k$, are (nonzero) rational numbers satisfying the relation
\begin{equation}\label{fau3}
b_{k,r} = \frac{3r+3}{4k+2} \, c_{k,r}.
\end{equation}

\begin{remark}
An explicit formula for $c_{k,r}$ in terms of the Bernoulli numbers was first obtained by Gessel and Viennot \cite[Equation (12.10)]{gessel}. In our notation, this last formula can be rewritten in the form (cf.\ \cite[Equation (3.5)]{cere2} and \cite[Equation (5-3)]{kellner})
\begin{equation*}
c_{k,r} = (-1)^{r-1} \frac{2^{r+1}}{r+1} \sum_{m=0}^{\lfloor \frac{r-1}{2} \rfloor} \binom{2r-1-2m}{r}
\binom{2k+1}{2m+1} B_{2k-2m}.
\end{equation*}
\end{remark}

In the next proposition we show how Knuth's formulas \eqref{pr21} and \eqref{pr22} for, respectively, $S_{2k}(n)$ and $S_{2k+1}(n)$, can be converted into the corresponding Faulhaber form by exploiting the relationship between $\binom{n+m}{2m}$ and the Legendre-Stirling numbers of the first kind.
\begin{proposition}\label{prop:11}
For integer $k \geq 1$, the power sum polynomials $S_{2k}(n)$ and $S_{2k+1}(n)$ can be expressed in the Faulhaber form \eqref{fau1} and \eqref{fau2}, respectively, with coefficients $b_{k,r}$ and $c_{k,r}$ given by
\begin{align}
b_{k,r} & = \sum_{m=r}^k  \frac{3 \cdot 2^r}{(2m+1)!} \, R(k,m) Ps_{m}^{(r)}, \label{coeb} \\
c_{k,r} & = \sum_{m=r}^k  \frac{2^{r+1}}{(2m+2)! (m+1)} \, R(k+1,m+1) Ps_{m+1}^{(r+1)}, \label{coec}
\end{align}
for $r =1,2, \ldots,k$, and where $Ps_{m}^{(r)}$ are the Legendre-Stirling numbers of the first kind.
\end{proposition}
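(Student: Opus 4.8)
The plan is to reduce both identities to a single algebraic fact, namely that $\binom{n+m}{2m}$ is a polynomial in $S_1(n)$ whose coefficients are the Legendre-Stirling numbers of the first kind, and then to read off the Faulhaber coefficients by matching this expansion against \eqref{fau1} and \eqref{fau2} power by power. Recall that $Ps_m^{(r)}$ is defined as the coefficient in the product expansion $\prod_{i=0}^{m-1}(x - i(i+1)) = \sum_{r=1}^m Ps_m^{(r)} x^r$. The key lemma I would establish first is that, for every $m \geq 1$,
\[
\binom{n+m}{2m} = \frac{1}{(2m)!}\prod_{i=0}^{m-1}\bigl(n(n+1) - i(i+1)\bigr) = \frac{1}{(2m)!}\sum_{r=1}^m 2^r Ps_m^{(r)}\,(S_1(n))^r.
\]
The first equality is the heart of the matter: writing the numerator of $\binom{n+m}{2m} = \frac{(n+m)(n+m-1)\cdots(n-m+1)}{(2m)!}$ as a product of $2m$ consecutive integers and pairing the factor $(n+j)$ with $(n+1-j)$ for $j = 1,\dots,m$, each pair collapses via $(n+j)(n+1-j) = n(n+1) - (j-1)j$, so the numerator becomes $\prod_{i=0}^{m-1}(n(n+1)-i(i+1))$. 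The second equality is then immediate upon substituting $x = n(n+1) = 2S_1(n)$ into the defining generating function, which is exactly what produces the factor $2^r$.

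For the even case, I would start from Knuth's formula \eqref{pr21}, use $\frac{1}{(2m+1)(2m)!} = \frac{1}{(2m+1)!}$, divide by $2n+1$, and insert the key lemma to obtain
\[
\frac{S_{2k}(n)}{2n+1} = \sum_{m=1}^k \frac{R(k,m)}{(2m+1)!}\sum_{r=1}^m 2^r Ps_m^{(r)}\,(S_1(n))^r = \sum_{r=1}^k \Bigl(\sum_{m=r}^k \frac{2^r R(k,m)}{(2m+1)!}Ps_m^{(r)}\Bigr)(S_1(n))^r,
\]
where the last step merely interchanges the two finite summations. On the other hand, $S_2(n) = \frac{2n+1}{3}S_1(n)$, so the Faulhaber form \eqref{fau1} reads $\frac{S_{2k}(n)}{2n+1} = \frac{1}{3}\sum_{r=1}^k b_{k,r}(S_1(n))^r$. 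Since the powers $(S_1(n))^r$ have pairwise distinct degrees $2r$ in $n$ and are therefore linearly independent, matching coefficients yields \eqref{coeb}.

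For the odd case the argument runs in parallel, starting from \eqref{pr22} with $k$ replaced by $k+1$ and keeping the factor $\frac{1}{m(2m)!}$. Substituting the key lemma and interchanging summations expresses $S_{2k+1}(n)$ as a polynomial $\sum_{p\geq 1}\bigl(\sum_{m\geq p}\frac{2^p R(k+1,m)}{m(2m)!}Ps_m^{(p)}\bigr)(S_1(n))^p$. The a priori existence of the Faulhaber form \eqref{fau2} guarantees that the coefficient of $(S_1(n))^1$ vanishes, so only the powers $(S_1(n))^{r+1}$ with $r = 1,\dots,k$ survive; matching these against $S_{2k+1}(n) = \sum_{r=1}^k c_{k,r}(S_1(n))^{r+1}$ and reindexing $m \mapsto m+1$ produces \eqref{coec}.

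The one genuinely new ingredient, and thus the main obstacle, is the key lemma: recognizing that $\binom{n+m}{2m}$ is a polynomial in $n(n+1)$ and identifying that polynomial with the Legendre-Stirling product, the decisive computation being the pairing of the $2m$ consecutive factors. Everything afterward is bookkeeping, consisting of a substitution, a Fubini-type interchange of finite sums, and a comparison of coefficients of the linearly independent powers $(S_1(n))^r$; the only delicate point, the vanishing of the linear term in the odd case, is supplied directly by the known Faulhaber structure \eqref{fau2}.
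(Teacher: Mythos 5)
Your proof is correct and follows essentially the same route as the paper: both arguments hinge on the expansion \eqref{fau4} of $\binom{n+m}{2m}$ in powers of $n(n+1)$ via the Legendre-Stirling numbers of the first kind, substituted into Knuth's formulas \eqref{pr21} and \eqref{pr22}, followed by an interchange of the two finite sums and identification of coefficients of the linearly independent powers of $S_1(n)$. Two local differences are worth recording. First, you actually \emph{prove} the key expansion, by pairing the factors $(n+j)(n+1-j)=n(n+1)-(j-1)j$ in the numerator of $\binom{n+m}{2m}$, whereas the paper simply cites it from \cite{coffey}; your version is self-contained on this point. Second, and more substantively, in the odd case you justify the vanishing of the coefficient of $(S_1(n))^1$ by appealing to the a priori existence of the Faulhaber form \eqref{fau2}, while the paper proves this vanishing independently, from the fact that $S_{2k+1}'(0) = (-1)^{2k+1}B_{2k+1} = 0$ for $k \geq 1$. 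The paper's choice buys two things: its proof re-derives, rather than presupposes, the Faulhaber structure for odd powers, and the resulting identity \eqref{fau5} is reused later in the paper (to obtain the horizontal recurrence for $R(k,m)$ in Section \ref{sec:4}). Your appeal to \eqref{fau2} is legitimate, since the paper quotes that form as well known with references, but it makes your treatment of the odd case a coefficient identification conditional on Faulhaber's theorem rather than an independent derivation of it.
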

\begin{proof}
Let us first recall that, according to \cite[Theorem 5.5]{andrews}, the (signed) Legendre-Stirling numbers of the first kind $Ps_{m}^{(r)}$ (OEIS A129467) satisfy the horizontal generating function (for integers $r$ and $m$ fulfilling $1\leq r \leq m$)
\begin{equation*}
\langle x \rangle_m = \sum_{r=1}^m Ps_m^{(r)} x^r,
\end{equation*}
where $\langle x \rangle_m$ is the generalized falling factorial defined by
\begin{equation*}
\langle x \rangle_m = \prod_{j=0}^{m-1} (x - j(j+1)).
\end{equation*}
(For the sake of illustration, the first values of $Ps_{m}^{(r)}$ are listed in Table \ref{tab:2}.) Now, invoking the binomial expansion (see \cite[Equation (5.1)]{coffey})
\begin{equation}\label{fau4}
\binom{n+m}{2m} = \frac{1}{(2m)!} \sum_{r=1}^m Ps_m^{(r)} \big( n(n+1) \big)^r,
\end{equation}
and combining \eqref{fau4} with \eqref{pr21}, we obtain
\begin{align*}
S_{2k}(n) & = (2n+1) \sum_{m=1}^k \sum_{r=1}^m \frac{2^r}{(2m+1)!} \, R(k,m) Ps_{m}^{(r)} \big( S_1(n) \big)^r  \\
& = S_2(n) \sum_{r=1}^k  \Biggl( \sum_{m=r}^k  \frac{3 \cdot 2^r}{(2m+1)!} \, R(k,m) Ps_{m}^{(r)} \Biggl) \big( S_1(n) \big)^{r-1},
\end{align*}
which is of the form \eqref{fau1} with the coefficients $b_{k,r}$ given by \eqref{coeb}.

\begin{table}[ttt]
\centering
\begin{tabular}{|c||c|c|c|c|c|c|c|}
\hline
$m \backslash r$ & $r=0$ & $r=1$ & $r=2$ & $r=3$ & $r=4$ & $r=5$ & $r=6$ \\
\hline\hline
$m=0$ & 1 & 0 & 0 & 0 & 0 & 0 & 0  \\ \hline
$m=1$ & 0 & 1 & 0 & 0 & 0 & 0 & 0  \\ \hline
$m=2$ & 0 & $-2$ & 1 & 0 & 0 & 0 & 0  \\ \hline
$m=3$ & 0 & 12 & $-8$ & 1 & 0 & 0 & 0  \\ \hline
$m=4$ & 0 & $-144$ & 108 & $-20$ & 1 & 0 & 0  \\ \hline
$m=5$ & 0 & 2880 & $-2304$ & 508 & $-40$ & 1 & 0  \\ \hline
$m=6$ & 0 & $-86400$ & 72000 & $-17544$ & 1708 & $-70$ & 1 \\ \hline
\end{tabular}
\caption{The Legendre-Stirling numbers of the first kind $Ps_{m}^{(r)}$ up to $m=6$.}\label{tab:2}
\end{table}

Likewise, by using \eqref{fau4} in \eqref{pr22}, we obtain
\begin{align*}
S_{2k+1}(n) & = \sum_{m=1}^{k+1} \sum_{r=1}^m \frac{2^r}{(2m)! m} \, R(k+1,m) Ps_{m}^{(r)} \big( S_1(n) \big)^r  \\
& = \sum_{r=1}^{k+1} \Biggl( \sum_{m=r}^{k+1}  \frac{2^r}{(2m)! m} \, R(k+1,m) Ps_{m}^{(r)} \Biggl ) \big( S_1(n) \big)^{r}.
\end{align*}
The key point to observe here is that the term in $S_1(n)$ in the above expansion of $S_{2k+1}(n)$ does vanish because the derivative of $S_{2k+1}(n)$ with respect to $n$ (considering $n$ as a continuous variable) evaluated at $n=0$, namely, $S_{2k+1}^{\prime}(0) = (-1)^{2k+1} B_{2k+1}$, is equal to zero for all $k \geq 1$. This means that
\begin{equation}\label{fau5}
\sum_{m=1}^{k+1}  \frac{1}{(2m)! m} \, R(k+1,m) Ps_{m}^{(1)} =0,
\end{equation}
and then
\begin{align*}
S_{2k+1}(n)  & = \sum_{r=2}^{k+1} \Biggl( \sum_{m=r}^{k+1} \frac{2^r}{(2m)! m} \, R(k+1,m)
Ps_{m}^{(r)} \Biggl)
\big( S_1(n) \big)^{r} \\
& = \big( S_1(n) \big)^2 \sum_{r=1}^{k} \Biggl( \sum_{m=r}^{k}  \frac{2^{r+1}}{(2m+2)! (m+1)} \,
R(k+1,m+1) Ps_{m+1}^{(r+1)} \Biggl) \big( S_1(n) \big)^{r-1},
\end{align*}
which is of the form \eqref{fau2} with the coefficients $c_{k,r}$ given by \eqref{coec}.
\end{proof}

\begin{remark}
Thanks to relation \eqref{fau3}, the coefficients $c_{k,r}$ can also be expressed as
\begin{equation*}
c_{k,r} = \frac{2k+1}{r+1} \sum_{m=r}^k  \frac{2^{r+1}}{(2m+1)!} \, R(k,m) Ps_{m}^{(r)}.
\end{equation*}
\end{remark}

On the other hand, substituting the expression in \eqref{fau4} for $\binom{n+m}{2m}$ into \eqref{alter}, we obtain the following Faulhaber-like formula for the alternating sum of even powers $\Omega_{2k}(n)$:
\begin{equation}\label{vien1}
\Omega_{2k}(n) = \sum_{r=1}^k d_{k,r} \big( S_1(n) \big)^{r}, \quad k \geq 1,
\end{equation}
where the coefficients $d_{k,r}$ are given by
\begin{equation*}
d_{k,r} = \sum_{m=r}^k  \frac{2^{r}}{(2m)!} \, R(k,m) Ps_{m}^{(r)}.
\end{equation*}

\begin{remark}
Gessel and Viennot \cite[Theorem 28]{gessel} established the following formula for $\Omega_{2k}(n)$:
\begin{equation}\label{vien2}
\Omega_{2k}(n) = \frac{1}{2} \sum_{r=1}^k \mathfrak{s}(k,r) \big( n(n+1)\big)^r, \quad k \geq 1,
\end{equation}
where $\mathfrak{s}(n,r)$ denotes the so-called Sali\'{e} numbers (OEIS A065547). By comparing \eqref{vien1} and \eqref{vien2}, and recalling \eqref{int2}, we get the following representation of the (signed) Sali\'{e} numbers in terms of $U(k,m)$ and $Ps_m^{(r)}$:
\begin{equation*}
\mathfrak{s}(k,r) = \sum_{m=r}^k U(k,m) Ps_m^{(r)}.
\end{equation*}
\end{remark}

\section{Concluding remarks}\label{sec:4}

We conclude this paper with the following remarks:

From \eqref{fau5} one can deduce the following horizontal recurrence relation for the numbers $R(k,m)$.
\begin{proposition}
For integer $k \geq 2$, we have
\begin{equation*}
\sum_{m=1}^k (-1)^m \frac{( (m-1)! )^2}{(2m)!} \, R(k,m) = 0.
\end{equation*}
\end{proposition}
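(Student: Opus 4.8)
The plan is to start from equation \eqref{fau5}, which was already established in the course of proving Proposition \ref{prop:11}, and to make the Legendre-Stirling coefficient $Ps_m^{(1)}$ explicit. First I would extract the coefficient of $x^1$ from the horizontal generating function $\langle x \rangle_m = \prod_{j=0}^{m-1}(x - j(j+1))$. Since the factor corresponding to $j=0$ is simply $x$, the entire product is divisible by $x$, so its constant term vanishes and the coefficient of $x^1$ is obtained by taking $x$ from the $j=0$ factor and the constant term $-j(j+1)$ from each of the remaining factors. This yields
\begin{equation*}
Ps_m^{(1)} = \prod_{j=1}^{m-1} \bigl( -j(j+1) \bigr) = (-1)^{m-1} (m-1)! \, m!,
\end{equation*}
where I have used $\prod_{j=1}^{m-1} j = (m-1)!$ and $\prod_{j=1}^{m-1}(j+1) = m!$. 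One can check this formula against the first column ($r=1$) of Table \ref{tab:2}.

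Next I would substitute this value of $Ps_m^{(1)}$ into \eqref{fau5}. Using $m!/m = (m-1)!$, the coefficient $\frac{1}{(2m)! \, m} Ps_m^{(1)}$ collapses to $(-1)^{m-1} \frac{((m-1)!)^2}{(2m)!}$, so that \eqref{fau5} takes the form
\begin{equation*}
\sum_{m=1}^{k+1} (-1)^{m-1} \frac{((m-1)!)^2}{(2m)!} \, R(k+1,m) = 0, \quad k \geq 1.
\end{equation*}
Multiplying through by $-1$ and then relabelling $k+1$ as $k$ (which is valid for $k \geq 2$, since \eqref{fau5} holds for every $k \geq 1$) produces precisely the claimed identity.

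The argument is entirely routine: the only genuine computation is the evaluation of $Ps_m^{(1)}$, and even that follows immediately from the product form of the generalized falling factorial, so I do not anticipate any real obstacle. What remains is simply careful bookkeeping of signs and factorials, together with the index shift that converts the summation range $1 \leq m \leq k+1$ into $1 \leq m \leq k$.
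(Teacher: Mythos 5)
Your proof is correct and follows essentially the same route as the paper: both start from \eqref{fau5}, substitute $Ps_m^{(1)} = (-1)^{m-1} m!\,(m-1)!$, and perform the index shift from $k+1$ to $k$. The only difference is that you derive the value of $Ps_m^{(1)}$ directly from the product form of $\langle x \rangle_m$ (a correct and self-contained computation), whereas the paper simply cites this fact from Guo and Wang.
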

\begin{proof}
This follows immediately by setting $k \to k-1$ in \eqref{fau5} and using the fact that $ Ps_{m}^{(1)} = (-1)^{m-1} m!(m-1)!$ \cite[Theorem 2]{guo}.
\end{proof}

Let us further note that the above recurrence relation can be written in terms of $U(k,m)$ as
\begin{equation*}
\sum_{m=1}^k (-1)^m ( (m-1)! )^2  U(k,m) = 0,  \quad k \geq 2.
\end{equation*}

Moreover, since $b_{k,1} = 6 B_{2k}$, it follows from \eqref{coeb} that
\begin{equation}\label{b2k}
B_{2k} = \sum_{m=1}^k (-1)^{m-1} \frac{m!(m-1)!}{(2m+1)!} \, R(k,m),  \quad k \geq 1,
\end{equation}
in accordance with the formula for $B_{2k}$ given by Coffey et al.\ \cite[p.\ 29]{coffey}. Hence, combining \eqref{b2k} with the classical Bernoulli's formula \cite[p.\ 48]{knoebel}, $S_k(n) = \frac{1}{k+1} n^{k+1} + \frac{1}{2}n^k + \frac{1}{k+1} \sum_{j=2}^{k} \binom{k+1}{j} B_j n^{k+1-j}$, we obtain that
\begin{equation*}
S_k(n) = \frac{n^{k+1}}{k+1} + \frac{1}{2}n^k - \frac{1}{k+1} \sum_{j=1}^{\lfloor \frac{k}{2} \rfloor}
\sum_{m=1}^j (-1)^m \frac{m!(m-1)!}{(2m+1)!} \binom{k+1}{2j} R(j,m) n^{k+1-2j},
\end{equation*}
which holds for all $k \geq 1$. Note that setting $n=1$ and $k = 2r+1$ in the above formula gives the identity
\begin{equation*}
\sum_{j=1}^r \sum_{m=1}^j (-1)^{m-1} \frac{m!(m-1)!}{(2m+1)!}
\binom{2r+2}{2j} R(j,m) = r, \quad r \geq 1.
\end{equation*}

\section{Acknowledgments}

The author would like to thank the anonymous referee and the editor-in-chief for the careful reading of a first version of the paper and for many insightful comments and suggestions that greatly improved the quality of the paper. The author also wants to thank Dionisio F.\ Y\'{a}\~{n}ez (University of Valencia) for sending me a copy of the paper \cite{butzer}.

\appendix\section{Appendix}

In this section we prove the relations \eqref{lm31} and \eqref{lm51}, which are equivalent to the binomial identities appearing in Lemma \ref{lm:3} and Lemma \ref{lm:6}, respectively.

\subsection{Proof of relation \eqref{lm31}}

We prove \eqref{lm31} by induction on $n$. To this end, we distinguish the cases of even and odd $k$.

\begin{itemize}[leftmargin=*]

\item Even $k$. It is evident that both sides of \eqref{lm31} are identically zero when $n < \frac{k}{2}$. Therefore, it is enough to show that
\begin{equation}\label{lm32}
\sum_{i= \frac{k}{2}}^n 4i (2i-k+1)(2i-k+2)\cdots (2i+k-1) = \frac{(2n-k+1)(2n-k+2) \cdots (2n+k+1)}{2k+1},
\end{equation}
for $n \geq \frac{k}{2}$. Relation \eqref{lm32} holds trivially for $n = \frac{k}{2}$. Assume the statement is true for $n = \frac{k}{2} + m$, where $m$ is an arbitrary, fixed nonnegative integer. We prove it for $n = \frac{k}{2} + m +1$
\begin{align*}
\sum_{i= \frac{k}{2}}^{\frac{k}{2} +m+1} & 4i (2i-k+1)(2i-k+2)\cdots (2i+k-1) \\[-2mm]
 & = \sum_{i= \frac{k}{2}}^{\frac{k}{2} +m} 4i (2i-k+1)(2i-k+2)\cdots (2i+k-1) \\
 & \qquad + (2k +4m+4)(2m+3)(2m+4) \cdots (2m+2k+1) \\[2mm]
 & = \frac{(2m+1)(2m+2) \cdots (2m+2k+1)}{2k+1} \\
 & \qquad + (2k +4m+4)(2m+3)(2m+4) \cdots (2m+2k+1) \\[2mm]
 & = \frac{(2m+3)(2m+4) \cdots (2m+2k+3)}{2k+1}.
\end{align*}
Thus \eqref{lm32} holds for all $n \geq \frac{k}{2}$ by induction.

\item Odd $k$. Likewise, for odd $k$ it is enough to show that
\begin{equation}\label{lm33}
\sum_{i=\frac{k+1}{2}}^n 4i (2i-k+1)(2i-k+2)\cdots (2i+k-1) = \frac{(2n-k+1)(2n-k+2) \cdots (2n+k+1)}{2k+1},
\end{equation}
for $n \geq \frac{k+1}{2}$. Clearly, relation \eqref{lm33} is satisfied for $n = \frac{k+1}{2}$. Assuming that \eqref{lm33} is true for $n = \frac{k+1}{2}+m$ (for any given nonnegative integer $m$), we prove it for $n = \frac{k+1}{2}+m +1$
\begin{align*}
\sum_{i=\frac{k+1}{2}}^{\frac{k+1}{2}+m+1} & 4i (2i-k+1)(2i-k+2)\cdots (2i+k-1) \\[-2mm]
 & = \sum_{i=\frac{k+1}{2}}^{\frac{k+1}{2}+m} 4i (2i-k+1)(2i-k+2)\cdots (2i+k-1) \\
 & \qquad + (2k +4m+6)(2m+4)(2m+5) \cdots (2m+2k+2) \\[2mm]
 & = \frac{(2m+2)(2m+3) \cdots (2m+2k+2)}{2k+1} \\
 & \qquad + (2k +4m+6)(2m+4)(2m+5) \cdots (2m+2k+2) \\[2mm]
 & = \frac{(2m+4)(2m+5) \cdots (2m+2k+4)}{2k+1},
\end{align*}
which means that \eqref{lm33} holds for all $n\geq \frac{k+1}{2}$ by induction. This completes the proof of \eqref{lm31}.
\end{itemize}

\subsection{Proof of relation \eqref{lm51}}

Similarly, we prove \eqref{lm51} by induction on $n$. Note that both sides of \eqref{lm51} are identically zero whenever $n < k$. Therefore, it suffices to show that
\begin{equation}\label{lm52}
\sum_{i=k}^n 2i (i-k+1)(i-k+2)\cdots (i+k-1) = \frac{(2n+1)(n-k+1)(n-k+2) \cdots (n+k)}{2k+1},
\end{equation}
for $n \geq k$. Clearly, \eqref{lm52} is correct for $n =k$. Assume \eqref{lm52} holds for $n = k +m$, with $m$ being any given nonnegative integer. Then
\begin{align*}
\sum_{i=k}^{k +m+1} & 2i (i-k+1)(i-k+2)\cdots (i+k-1) \\[-2mm]
 & = \sum_{i=k}^{k +m} 2i (i-k+1)(i-k+2)\cdots (i+k-1) \\
 & \qquad + (2k+2m+2)(m+2)(m+3) \cdots (m+2k) \\[2mm]
 & = \frac{(2k+2m+1)(m+1)(m+2) \cdots (m+2k)}{2k+1} \\
 & \qquad + (2k +2m+2)(m+2)(m+3) \cdots (m+2k) \\[2mm]
 & = \frac{(2k+2m+3)(m+2)(m+3) \cdots (m+2k+1)}{2k+1}.
\end{align*}
Thus \eqref{lm52} holds for all $n \geq k$ by induction, and \eqref{lm51} is proven.

\end{document}